\newtheorem{theorem}{Theorem}
\newtheorem{lemma}{Lemma}
\newtheorem{definition}{Definition}
\newcommand{\cal}{\mathcal}
\begin{document}

\title{Diagrams of opposition: an algebraic viewpoint}
\date{July 31, 2024}
\author{Chai Wah Wu\\IBM T. J. Watson Research Center\\P. O. Box 218, Yorktown Heights, New York 10598, USA\\e-mail: cwwu@us.ibm.com}

\begin{abstract}
We study the square of opposition and its various geometric generalizations from an algebraic viewpoint. In particular, we show how the various shapes of oppositions can be framed under an algebraic framework and we illustrate this approach with algebraic structures beyond the traditional logical structures. 
\end{abstract}

\keywords{logic, algebra, partial order, square of opposition.}
\subjclass{03A05, 03G05, 06A06}

\maketitle

\section{Introduction}
The classical (Aristotelian) square of opposition is a diagram describing the relationships between propositions in the study of syllogisms \cite{Englebretsen1976}.
The vertices of this square are the $4$ logical statements: $A$: ``All $P$ is $Q$'', $E$: ``All $P$ is not $Q$'', $I$: ``Some $P$ is $Q$'', and $O$: ``Some $P$ is not $Q$''. 
This diagram can be extended to a graded square of opposition where the truth value of each vertex can take on a range of values beyond just true and false.

The square of opposition has also been extended to hexagons \cite{blanche:vrin:1966}, cubes \cite{dubois:cube:2015} and other types of geometric shapes \cite{Dubois2020}. 
In most of these discussions, the objects on the vertices of these geometric shapes are derived from logical structures, such as Boolean algebra of 2 symbols, Boolean algebra of subsets, T-norms, etc.
The purpose of this paper is to generalize these logical structures to more general algebraic structures. We provide some specific examples of these structures that goes beyond the traditional logical structures considered.

\section{Square of opposition on an algebraic structure}

As is common in the classical square of opposition, the following terminologies are used:
\begin{definition} \label{def:logic}
Let $A$ and $B$ be two logical propositions.
\begin{itemize}
\item $A$ is {\em contrary} to $B$ if they cannot both be true.
\item $A$ is {\em sub-contrary} to $B$ if they cannot both be false.
\item $A$ is {\em contradictory} to $B$ if $A$ is true if and only if $B$ is false.
\item $A$ {\em sub-implicates} $B$ if $A$ implies $B$ (written as $A\Rightarrow B$), i.e. either $A$ is false or $B$ is true.
\item $A$ {\em super-implicates} $B$ if $B$ implies $A$, i.e. either $A$ is true or $B$ is false.
\end{itemize}
\end{definition}

Let us define the following algebraic structure:

\begin{definition}
A tuple $(X,\neg, \preceq)$ is said to be in class $\cal C$ if the following conditions are satisfied:
\begin{itemize}
\item $\neg: X \rightarrow X$ is an involutory unary operation on the set $X$, i.e., $\neg (\neg (x)) = x$ for all $x\in X$,
\item $\preceq$ is a partial order\footnote{I.e., $\preceq$ is antisymmetric, reflexive and transitive. Also known as a weak, reflexive \cite{Wallis2012} or non-strict \cite{Simovici2014} partial order.} on $X$,
\item $a\preceq b$ if and only if $\neg b\preceq \neg a$,
\item There exists an element $0\in X$ such that $\neg 0 \preceq 0$. The nonempty set $Z$ (also referred to as {\em zeros}) is defined as $Z = \{x\in X:\neg x \preceq x\}$.
\end{itemize}
\label{def:classc}
\end{definition}

We use $P\prec Q$ to denote that $P\preceq Q$ and $P\neq Q$. 
We next define the following $4$ logical statements using the same letters $A$, $E$, $I$ and $O$ that are used in the classical square of opposition above.
\begin{itemize}
\item $A$: $P\preceq Q$
\item $E$: $P\preceq \neg Q$
\item $I$: $P\not\preceq \neg Q$
\item $O$: $P\not\preceq  Q$
\end{itemize}

The following result shows that these statements satisfy the relationships in a square of opposition as illustrated in Figure \ref{fig:square}. 

\begin{theorem}
Consider $(X,\neg, \preceq) \in \cal C$ with $P,Q\in X$. Suppose there exists $0\in Z$ such that $P\succ 0$. Then
\begin{itemize}
\item $A\Rightarrow I$.
\item $E\Rightarrow O$.
\item $A$ and $E$ cannot both be true.
\item $I$ and $O$ cannot both be false.
\item $A$ is equivalent to the negation of $O$.
\item $E$ is equivalent to the negation of $I$.
\end{itemize}
\label{thm:square}
\end{theorem}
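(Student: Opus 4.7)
The plan is to observe that the last two bullets are essentially tautological, to reduce the two sub-implication bullets to the contrary bullet, and then to focus all the real work on proving that $A$ and $E$ cannot both hold.

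First I would note that by construction $O$ is the propositional negation of $A$ and $I$ is the propositional negation of $E$, so the ``contradictory'' bullets are immediate. Moreover, $I$ and $O$ being simultaneously false is by definition the same as $E$ and $A$ being simultaneously true, so the ``sub-contrary'' bullet is just a rephrasing of the ``contrary'' bullet. Similarly, once the contrary bullet is known, the implications $A \Rightarrow I$ and $E \Rightarrow O$ follow by contraposition: if $A$ held but $I$ failed, then $A$ and $E$ would both hold, contradicting the contrary bullet; dually for $E \Rightarrow O$. So everything reduces to one claim.

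The core step is the contrary statement, and this is where the hypothesis $P \succ 0$ is needed. I would assume $P \preceq Q$ and $P \preceq \neg Q$ toward a contradiction. Applying the compatibility axiom $a\preceq b \Leftrightarrow \neg b \preceq \neg a$ to $P \preceq Q$ gives $\neg Q \preceq \neg P$, and chaining with $P \preceq \neg Q$ through transitivity yields $P \preceq \neg P$. Now I combine this with $0 \preceq P$ to get $0 \preceq \neg P$, apply the compatibility axiom again to obtain $P = \neg\neg P \preceq \neg 0$, and finally use $\neg 0 \preceq 0$ (the defining property of $0 \in Z$) to conclude $P \preceq 0$. Antisymmetry of $\preceq$ together with $0 \preceq P$ then forces $P = 0$, contradicting $P \succ 0$.

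The main (only) obstacle is orchestrating this chain correctly; each individual step is a one-line invocation of an axiom of class $\mathcal{C}$, but one must notice that $P \preceq \neg P$ by itself is not yet a contradiction (the involution $\neg$ fixing $P$ is not forbidden), and that the hypothesis $P \succ 0$ together with the zero condition $\neg 0 \preceq 0$ is precisely what is needed to pin $P$ down to $0$. Once that is seen, the rest of the theorem unfolds from the two observations in the first paragraph.
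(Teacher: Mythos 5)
Your proof is correct; every step is licensed by the axioms of class $\mathcal{C}$. The contradictory pairs are definitional ($O$ and $I$ are literally the propositional negations of $A$ and $E$), so your reductions of sub-contrariety and of the two sub-implications to the single contrariety claim are valid bookkeeping, and your algebraic chain --- from $A$ and $E$ derive $P \preceq \neg Q \preceq \neg P$, then $0 \preceq \neg P$ by transitivity, then $P = \neg\neg P \preceq \neg 0 \preceq 0$, then $P = 0$ by antisymmetry --- correctly contradicts $P \succ 0$. Where you differ from the paper is in the decomposition: the paper proves $A \Rightarrow I$ by a direct argument (from $0 \prec P \preceq Q$ it derives $\neg Q \preceq \neg P \prec \neg 0 \preceq 0$, hence $\neg Q \prec P$, hence $P \not\preceq \neg Q$ by antisymmetry), and then proves contrariety by a second, separate argument that is essentially your chain ($0 \prec P \preceq \neg P$ together with $\neg P \preceq \neg 0 \preceq 0$), with sub-contrariety noted to follow. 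So the paper runs the order-theoretic computation twice where you run it once; your organization is more economical and makes transparent that the entire square rests on one fact --- $P \succ 0$ is incompatible with $P \preceq \neg P$ --- while the paper's direct treatment of $A \Rightarrow I$ buys the marginally sharper intermediate conclusion $\neg Q \prec P$ but is otherwise redundant given the contradictory pairs. One small streamlining available to you: rather than passing through $0 \preceq \neg P$, apply the order-reversal axiom directly to $0 \preceq P$ to get $\neg P \preceq \neg 0$, yielding the single chain $0 \preceq P \preceq \neg P \preceq \neg 0 \preceq 0$ and hence $P = 0$ at once.
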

\begin{proof} It is clear that $A$ is true if and only if $O$ is false and that $E$ is true if and only if $I$ is false. Suppose $A$ is true. Then $0\prec P\preceq Q$ implies that $\neg Q \preceq \neg P\prec \neg 0 \preceq 0$. Thus $\neg Q \prec P$ and therefore $P\not\preceq \neg Q$ by antisymmetry of $\preceq$.
A similar argument shows that $E\Rightarrow O$. 
Suppose both $A$ and $E$ are true. Then $E$ implies $Q\preceq \neg P$ and combined with $A$ implies $0 \prec P\preceq -P$. Since $\neg P\preceq \neg 0\preceq 0$ this results in a contradiction.
Thus $A$ and $E$ cannot both be true.
Similarly this shows that $I$ and $O$ cannot both be false.
\end{proof}

\begin{figure}[htbp]
\centerline{\includegraphics[width=5in]{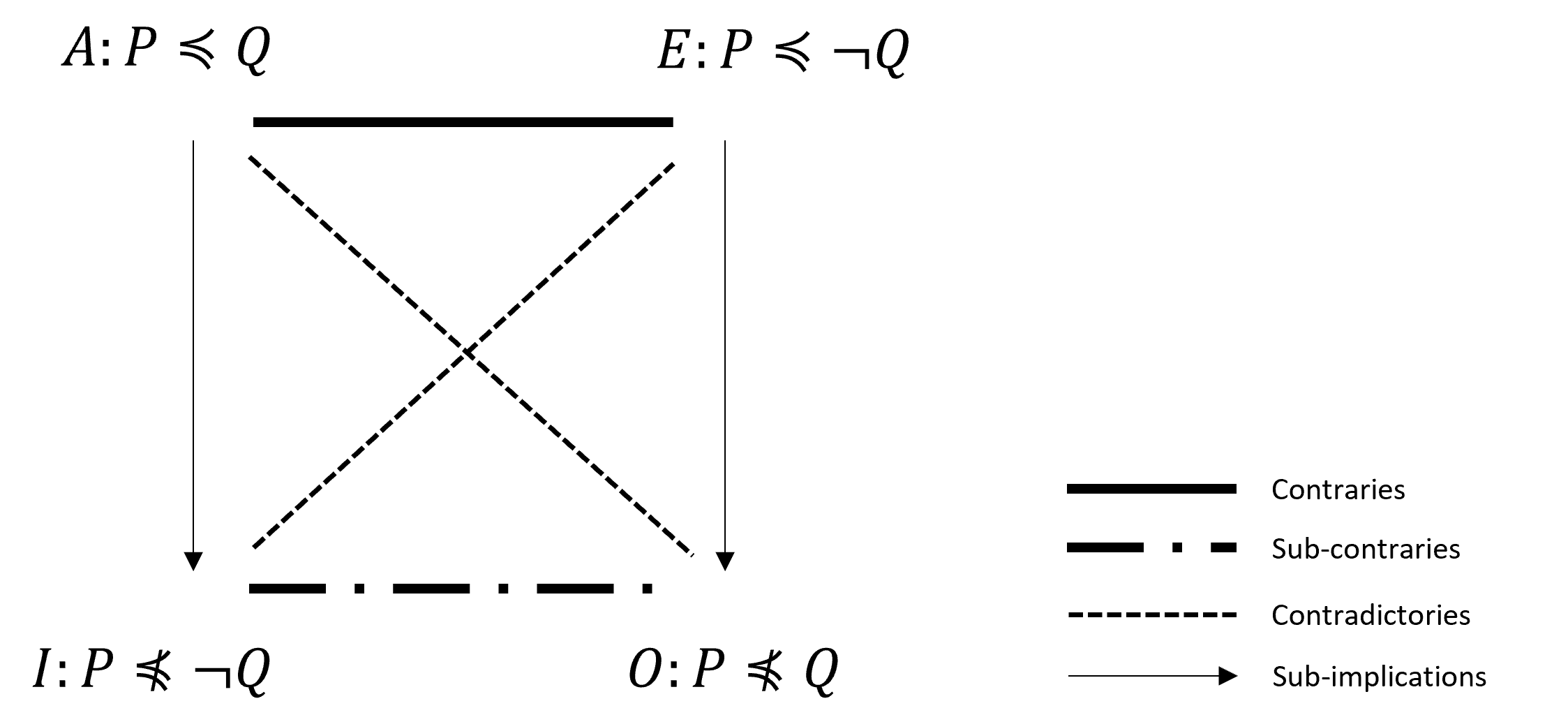}}
\caption{Square of opposition for statements $A$, $E$, $I$ and $O$ of class $\cal C$.}
\label{fig:square}
\end{figure}

Since the formalization of symbolic logic by Boole and others, a modern square has emerged that differs from the classical square.
What mainly distinguishes the classical square from the modern square of Boole is the (implicit) assumption of existential import in the classical square, where the universal quantifier implies the existence of a particular \cite{MACCOLL1905,RUSSELL1905,Wu1969}, i.e. ``All $P$ is $Q$'' implies the existence of a $P$ that is $Q$, i.e. $A$ implies $I$ in the classical square, but not in the modern square\footnote{See \cite{Wreen:1984,westerstaahl2012classical} for other viewpoints on the difference between classical and modern squares of opposition.}. Our formulation and Theorem \ref{thm:square} show that the (controversial) existential import in the classical square can be replaced by the assumption that $P\succ 0$.

\section{Cube of opposition}
Following \cite{Dubois2012}, we define an additional set of $4$ logical statements by substituting $P$ with $\neg P$ and $Q$ with $\neg Q$.
\begin{itemize}
\item $a$: $\neg P\preceq \neg Q$
\item $e$: $\neg P\preceq Q$
\item $i$: $\neg P\not\preceq Q$
\item $o$: $\neg P\not\preceq  \neg Q$
\end{itemize}

We have the following conclusions which form the cube of opposition as illustrated in Figure \ref{fig:cube}. Note that in contrast with \cite{Dubois2012}, there are  links between any two vertices in Figure \ref{fig:cube}. In particular, there are links between $A$ and $a$ and $E$ and $e$ \footnote{These links were also defined in Refs. \cite{Reichenbach1952,Dubois2020}}.

\begin{theorem}
Consider $(X,\neg, \preceq) \in \cal C$ with $P,Q\in X$. Suppose there exists $0\in Z$ such that $P\succ 0$. Then
\begin{itemize}
\item $A\Rightarrow e$.
\item $E\Rightarrow a$.
\item $o\Rightarrow I$.
\item $i\Rightarrow O$.
\item $a$ and $e$ cannot both be true.
\item $i$ and $o$ cannot both be false.
\item $A$ and $E$ cannot both be true.
\item $I$ and $O$ cannot both be false.
\item $a$ and $I$ cannot both be false.
\item $e$ and $O$ cannot both be false.
\item $A$ and $i$ cannot both be true.
\item $E$ and $o$ cannot both be true.
\item $a$ is equivalent to the negation of $o$.
\item $e$ is equivalent to the negation of $i$.
\item $A$ is equivalent to the negation of $O$.
\item $E$ is equivalent to the negation of $I$.
\end{itemize}
If in addition $P\neq Q$ and $P\neq \neg Q$, then
\begin{itemize}
\item $A$ and $a$ cannot both be true.
\item $E$ and $e$ cannot both be true.
\item $I$ and $i$ cannot both be false.
\item $O$ and $o$ cannot both be false.
\item $A\Rightarrow o$.
\item $E\Rightarrow i$.
\item $a\Rightarrow O$.
\item $e\Rightarrow I$.
\end{itemize}
\end{theorem}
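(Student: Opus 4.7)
The plan is to combine three ingredients: (i) the consequence $\neg P \prec P$ of the hypothesis $P\succ 0$, obtained from the chain $\neg P \preceq \neg 0 \preceq 0 \prec P$; (ii) Theorem~\ref{thm:square}, applied both directly and after the symmetric substitution $P \mapsto \neg P$ (since $a, e, i, o$ are precisely $A, E, I, O$ with $P$ replaced by $\neg P$); and (iii) antisymmetry of $\preceq$, which becomes sharp in the second part under the hypotheses $P \neq Q$ and $P \neq \neg Q$.

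For the first part, the four equivalences ($A \equiv \neg O$, and so on) are immediate from the definitions of the statements. The implications $A \Rightarrow e$ and $E \Rightarrow a$ follow by inserting $\neg P \prec P$ to the left of $P \preceq Q$ or $P \preceq \neg Q$, and their contrapositives yield $i \Rightarrow O$ and $o \Rightarrow I$. The four cross-cube items such as ``$a$ and $I$ cannot both be false'' and ``$A$ and $i$ cannot both be true'' are reformulations of these four implications using the definitional negations $\neg a \equiv o$, $\neg I \equiv E$, and so on. The top-face incompatibility ``$A$ and $E$ cannot both be true'' and its contrapositive ``$I$ and $O$ cannot both be false'' are exactly Theorem~\ref{thm:square}, and the corresponding bottom-face items for $a, e, i, o$ come from applying Theorem~\ref{thm:square} under the symmetric substitution.

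For the second part, the two extra hypotheses turn the problem into a pair of clean antisymmetry arguments. If $A$ and $a$ both held, then $A$ gives $P \preceq Q$ while $a$, after applying $\neg$ to both sides of $\neg P \preceq \neg Q$, gives $Q \preceq P$, so antisymmetry forces $P = Q$, contradicting $P \neq Q$. Analogously, $E$ and $e$ both true would force $P = \neg Q$. The statements ``$I$ and $i$ cannot both be false'' and ``$O$ and $o$ cannot both be false'' are the contrapositives, and the four diagonal implications $A \Rightarrow o$, $E \Rightarrow i$, $a \Rightarrow O$, $e \Rightarrow I$ then follow by combining each ``cannot both be true'' pair with the appropriate definitional negation.

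The step that I anticipate needing the most care is the symmetric bottom-face application in the first part, namely ``$a$ and $e$ cannot both be true'' (and its contrapositive). A literal re-run of the Theorem~\ref{thm:square} argument with $\neg P$ playing the role of $P$ needs an element $0' \in Z$ with $\neg P \succ 0'$, and this is not automatic from $P \succ 0$ (which only yields $\neg P \preceq 0$). I would address this either by invoking the symmetric hypothesis on $\neg P$ explicitly, or by augmenting the statement at that point; this is the one place where the plan is not a direct transcription of Theorem~\ref{thm:square}.
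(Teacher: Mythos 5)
Your plan reproduces the paper's proof almost line for line: the front face is delegated to Theorem~\ref{thm:square}; $P\succ 0$ gives the chain $\neg P \prec \neg 0 \preceq 0 \prec P$, hence $A\Rightarrow e$ and $E\Rightarrow a$ by transitivity, with $o\Rightarrow I$ and $i\Rightarrow O$ by contraposition; the four mixed items (``$A$ and $i$ cannot both be true,'' ``$a$ and $I$ cannot both be false,'' etc.) come from these implications via the definitional contradictories $a\equiv\neg o$, $e\equiv\neg i$; and your second part is exactly the paper's pair of antisymmetry arguments ($A\wedge a$ forces $P=Q$, $E\wedge e$ forces $P=\neg Q$) followed by the same negation bookkeeping for the contrapositives and the four diagonal implications. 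On all of these items your argument is correct and is the paper's argument.

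The one place you depart from the paper is precisely the place you flag, and your worry is justified. The paper disposes of the rear face with the sentence that it ``follows from Theorem~\ref{thm:square},'' i.e., it tacitly applies Theorem~\ref{thm:square} with $\neg P,\neg Q$ in place of $P,Q$; but that application requires some $0'\in Z$ with $\neg P\succ 0'$, which $P\succ 0$ does not supply (as you note, it gives only $\neg P\preceq 0$). This is not merely a presentational gap: the rear-face contrariety can actually fail under the stated hypotheses. Take the paper's own structure from Section~\ref{sec:boolean_logic}, $X=\left\{0,\frac{1}{2},1\right\}$ with $\neg x = 1-x$ and $Z=\left\{\frac{1}{2},1\right\}$, and set $P=1$, $Q=\frac{1}{2}$, with the zero element taken to be $\frac{1}{2}\in Z$. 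Then $P\succ\frac{1}{2}$, yet $a$ and $e$ both assert $0\preceq\frac{1}{2}$ and so are both true, and correspondingly $i$ and $o$ are both false (while every other item of the theorem, including the entire second part, holds in this example). So your proposed repair, adding a hypothesis on $\neg P$, is the right move; note also that a weaker assumption suffices: $a\wedge e$ yields $\neg P\preceq\neg Q$ and, dualizing $e$, $\neg Q\preceq P$, hence $\neg P\preceq P$ by transitivity, so assuming $P\notin Z$ already rules out $a\wedge e$ (and the hypothesis $\neg P\succ 0'$ implies $P\notin Z$). In short: your proof is faithful to the paper's wherever the paper's proof is sound, and the single step you declined to transcribe is a genuine error in the paper, not a gap in your understanding.
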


\begin{proof}
Suppose $P\succ 0$.
The links on the front and rear faces of the cube follow from Theorem \ref{thm:square}.
$P\succ 0$ implies that $\neg P\prec \neg 0 \preceq 0 \prec P$ and thus $A\Rightarrow e$. Similarly $E\Rightarrow a$. This also implies that $o\Rightarrow I$ and $i\Rightarrow O$ via contraposition.
Since $e$ is equivalent to the negation of $i$, $A\Rightarrow e$ means that $A$ and $i$ cannot both be true.
Since the negation of $I$ implies $a$, $I$ and $a$ cannot both be false. Similarly $e$ and $O$ cannot both be false and $E$ and $o$ cannot both be true. 

Suppose in addition that $P\neq Q$.
If $A$ and $a$ are both true, then $P\preceq Q$ and $Q\preceq P$, i.e. $P=Q$ reaching a contradiction.
Similarly if $P\neq \neg Q$, then if $E$ and $e$ are both true, we get $P=\neg Q$ which is a contradiction.
This also implies that $I$ and $i$ cannot both be false and $O$ and $o$ cannot both be false.
Since $A$ and $a$ cannot both be true, $A$ and the negation of $o$ cannot both be true and this means that $A\Rightarrow o$.
Similar arguments show that $E\Rightarrow i$, $a\Rightarrow O$, and $e\Rightarrow I$.
\end{proof}

Since there is a link between every pair of vertices in Fig. \ref{fig:cube}, a (nontraditional) square of opposition can be constructed from any $4$ distinct vertices of the cube.

\begin{figure}[htbp]
\centerline{\includegraphics[width=5in]{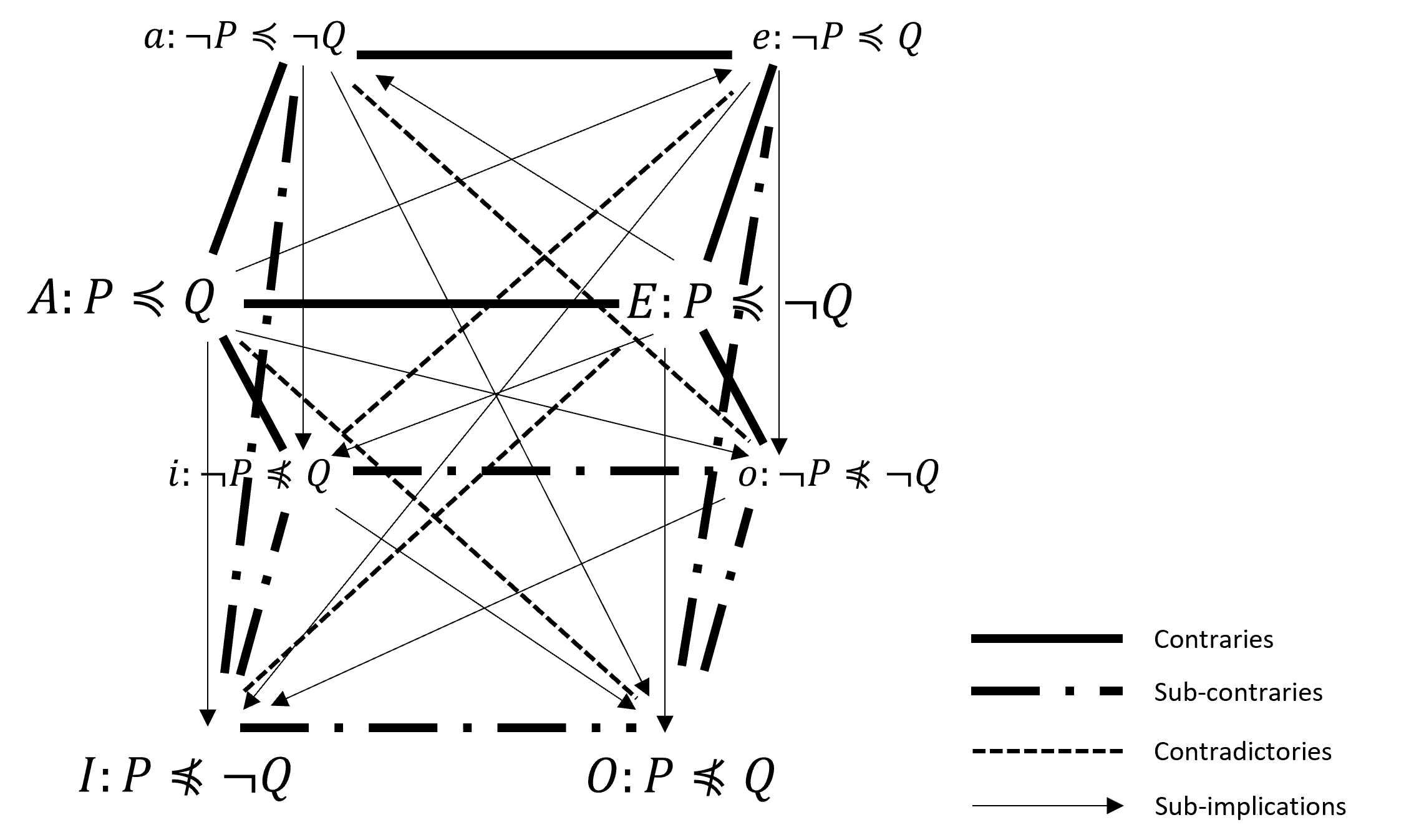}}
\caption{Cube of opposition for statements $A$, $E$, $I$, $O$, $a$, $e$, $i$, and $o$ of class $\cal C$.}
\label{fig:cube}
\end{figure}

\section{Hexagon of opposition}
In \cite{blanche:vrin:1966}, a hexagon of opposition was considered by the addition of two logical statements:
$U = A\vee E$ and $Y = I\wedge O$. It is easy to show that 
\begin{itemize}
\item $A\Rightarrow U$ and $E\Rightarrow U$.
\item $Y\Rightarrow I$ and $Y\Rightarrow O$.
\item $A$ and $Y$ cannot both be true.
\item $E$ and $Y$ cannot both be true.
\item $I$ and $U$ cannot both be false.
\item $O$ and $U$ cannot both be false.
\end{itemize}
This is illustrated in Fig. \ref{fig:hexagon}.

\begin{figure}[htbp]
\centerline{\includegraphics[width=4in]{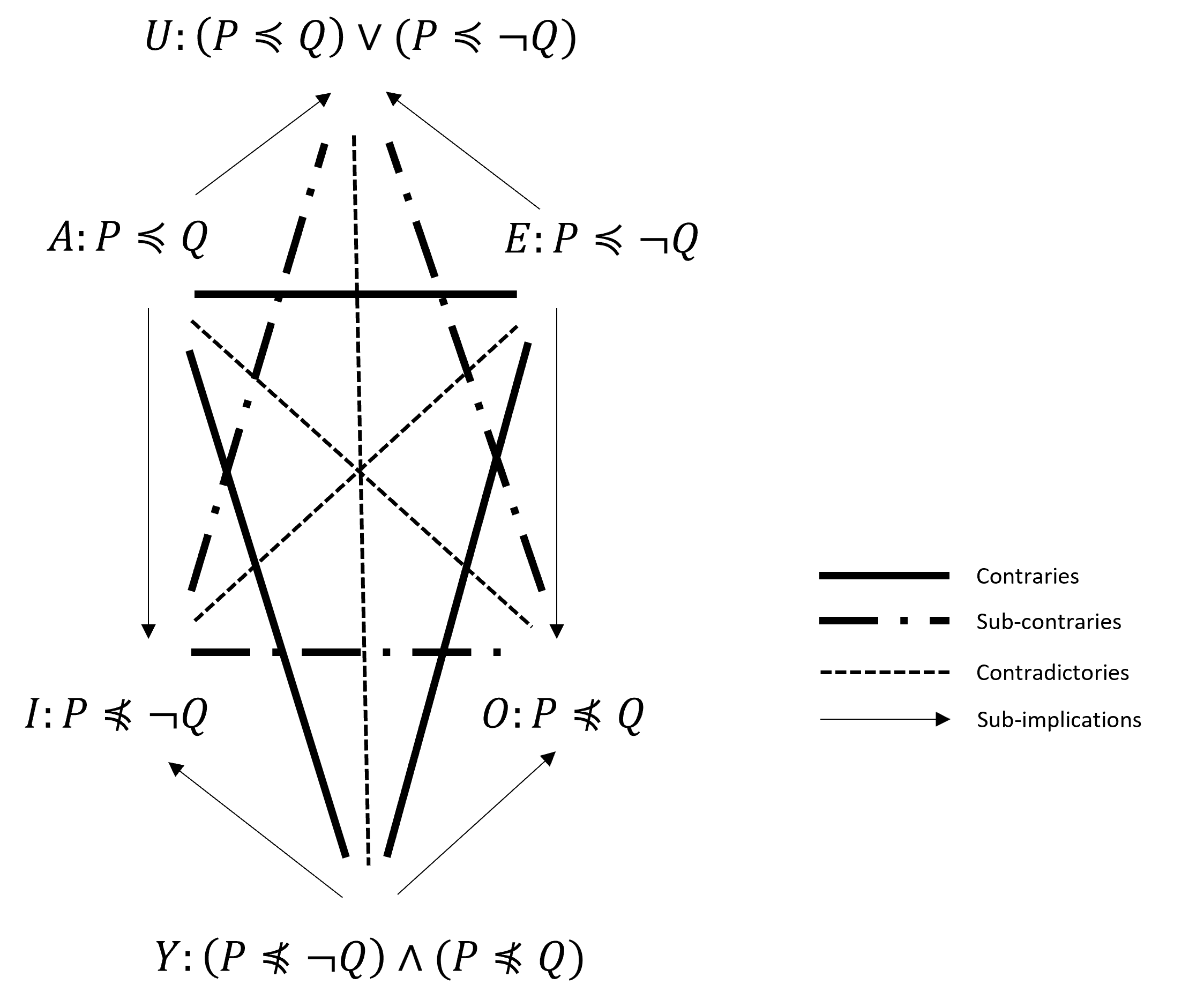}}
\caption{Hexagon of opposition for statements $A$, $E$, $I$, $O$, $U$, $Y$ of class $\cal C$.}
\label{fig:hexagon}
\end{figure}

We define $\max(Y)$ as $\{y\in Y | \forall z\in Y, z\preceq y\}$ and $\min(Y)$ as $\{y\in Y | \forall z\in Y, z\succeq y\}$. 

\begin{lemma}
If either $Q\preceq \neg Q$ or $\neg Q\preceq Q$, then $\max(\{Q,\neg Q\})$ exists and is unique and we denote it 
as $|Q| = \max(\{Q,\neg Q\})$. This also implies that $\min(\{Q,\neg Q\})$ exists and is unique and that $\neg |Q| = \min(\{Q,\neg Q\})$.
\end{lemma}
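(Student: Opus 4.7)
The plan is to split into the two hypothesized cases and, in each one, simply exhibit the maximum (resp.\ minimum) as one of the two elements of the set $\{Q,\neg Q\}$. Since the partial order $\preceq$ is antisymmetric and reflexive, a maximum of a subset of $X$, if it exists, is automatically unique, so uniqueness will be a one-line consequence of the definition rather than something we need to argue separately.

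First I would suppose $Q\preceq \neg Q$. Combined with reflexivity $\neg Q\preceq \neg Q$, this says $\neg Q$ is an upper bound of $\{Q,\neg Q\}$ that lies in the set itself, hence $\max(\{Q,\neg Q\})$ exists and equals $\neg Q$. Dually, $Q$ is a lower bound lying in the set, so $\min(\{Q,\neg Q\})=Q$. Observe that $\neg|Q|=\neg(\neg Q)=Q$ by involutivity of $\neg$, matching the claim $\neg|Q|=\min(\{Q,\neg Q\})$. Symmetrically, if $\neg Q\preceq Q$ then $|Q|=Q$ and $\min(\{Q,\neg Q\})=\neg Q=\neg|Q|$.

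Uniqueness in either case is immediate: if $y_1,y_2$ are both maxima then $y_1\preceq y_2$ and $y_2\preceq y_1$, so $y_1=y_2$ by antisymmetry; the same argument works for the minimum. The borderline situation $Q=\neg Q$ (which occurs precisely when both hypotheses hold) fits both cases consistently, since then $\{Q,\neg Q\}=\{Q\}$ and trivially $|Q|=Q=\neg|Q|$.

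There is no real obstacle here; the only thing to be careful about is not to invoke any lattice structure on $X$ beyond what Definition \ref{def:classc} provides. In particular, I would avoid language like ``least upper bound'' and instead work only with the element-of-the-set notion of $\max$ and $\min$ as defined just before the lemma, so that the proof uses only reflexivity, antisymmetry, and the involutivity of $\neg$.
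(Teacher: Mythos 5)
Your proof is correct, and it is the natural argument: the paper states this lemma without any proof, evidently regarding it as immediate, and your case split with reflexivity giving existence, antisymmetry giving uniqueness, and involutivity of $\neg$ identifying $\neg|Q|$ with the minimum is exactly the reasoning needed to fill that gap. The only stylistic alternative worth noting is that the minimum claim could also be derived uniformly from the maximum claim via the order-reversing axiom ($a\preceq b$ iff $\neg b\preceq \neg a$) rather than by re-exhibiting the minimum in each case, but this changes nothing of substance.
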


We can use $|Q|$ to reduce the compound logical statements $U$ and $Y$ as shown in Fig. \ref{fig:hexagon3} by redefining them as: $U:P\preceq |Q|$ and $Y:P\not\preceq |Q|$ .
This follows from the fact that if $|Q|$ exists then $(P\preceq Q)\vee(P\preceq\neg Q)$ if and only if $P\preceq |Q|$. Similarly $(P\not\preceq Q)\wedge(P\not\preceq\neg Q)$ if and only if $P\not\preceq |Q|$
 
\begin{theorem}
Consider $(X,\neg, \preceq) \in \cal C$ with $P,Q\in X$. Suppose either $Q\preceq \neg Q$ or $\neg Q\preceq Q$ and there exists $0\in Z$ such that $P\succ 0$. Then Fig. \ref{fig:hexagon3} is a hexagon of opposition satisfying the corresponding relationships, i.e.
\begin{itemize}
\item $A\Rightarrow I$.
\item $E\Rightarrow O$.
\item $A$ and $E$ cannot both be true.
\item $I$ and $O$ cannot both be false.
\item $A$ is equivalent to the negation of $O$.
\item $E$ is equivalent to the negation of $I$.
\item $A\Rightarrow U$ and $E\Rightarrow U$.
\item $Y\Rightarrow I$ and $Y\Rightarrow O$.
\item $A$ and $Y$ cannot both be true.
\item $E$ and $Y$ cannot both be true.
\item $I$ and $U$ cannot both be false.
\item $O$ and $U$ cannot both be false.
\end{itemize}
\end{theorem}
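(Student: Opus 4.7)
The first six bullets coincide verbatim with the conclusions of Theorem \ref{thm:square}, whose hypotheses (membership in $\cal C$ and existence of $0\in Z$ with $P\succ 0$) are exactly what this theorem also assumes, so I would simply cite Theorem \ref{thm:square} and move on. For the remaining six bullets, my plan is to first invoke the preceding lemma to replace the compound statements $U$ and $Y$ by their compact reformulations $U\!:\,P\preceq |Q|$ and $Y\!:\,P\not\preceq |Q|$. The comparability hypothesis $Q\preceq \neg Q$ or $\neg Q\preceq Q$ is precisely the trigger that makes the lemma applicable, giving the existence of $|Q|=\max\{Q,\neg Q\}$, and in particular the two key inequalities $Q\preceq |Q|$ and $\neg Q\preceq |Q|$.

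From these two inequalities the four implications involving $U$ and $Y$ are immediate by transitivity: $A\Rightarrow U$ since $P\preceq Q\preceq |Q|$, $E\Rightarrow U$ since $P\preceq \neg Q\preceq |Q|$, and by contraposition (noting $Y=\neg U$ in the reformulation) $Y\Rightarrow I$ and $Y\Rightarrow O$, or equivalently directly: if $P\not\preceq |Q|$, then neither $P\preceq Q$ nor $P\preceq \neg Q$ can hold, as either would chain to $P\preceq |Q|$.

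The contrariety and sub-contrariety bullets then reduce to formal manipulation. Since $Y$ is the literal negation of $U$, the implication $A\Rightarrow U$ forbids the joint truth of $A$ and $Y$; the same argument, using $E\Rightarrow U$, handles $E$ and $Y$. Dually, to show that $I$ and $U$ cannot both be false, I would observe that if $I$ is false then $E$ holds (by the equivalence $E\equiv \neg I$ already provided by Theorem \ref{thm:square}), and $E\Rightarrow U$ forces $U$ to be true; the symmetric argument with $O$, $A$, and $A\Rightarrow U$ closes the last bullet.

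There is no real obstacle beyond being careful that the reformulation of $U$ and $Y$ via $|Q|$ is justified; this is exactly the content of the remark immediately preceding the theorem statement, which hinges on $|Q|$ being well-defined, and hence on the comparability hypothesis. Once that reformulation is in hand, the entire argument is a short cascade of transitivity together with the equivalences $A\equiv \neg O$ and $E\equiv \neg I$ from the square, with no case analysis on whether $Q\preceq \neg Q$ or $\neg Q\preceq Q$ actually holds.
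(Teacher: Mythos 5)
Your proposal is correct and follows essentially the same route the paper intends: the paper gives no explicit proof of this theorem, instead relying on Theorem \ref{thm:square} for the square relations, the ``easy to show'' hexagon bullets for $U=A\vee E$ and $Y=I\wedge O$, and the remark (via the lemma on $|Q|$) that the redefined $U\!:P\preceq|Q|$ and $Y\!:P\not\preceq|Q|$ are equivalent to those compounds. Your write-up simply makes these implicit steps explicit --- citing the square theorem, using $Q\preceq|Q|$ and $\neg Q\preceq|Q|$ with transitivity, and exploiting $Y=\neg U$ together with $E\equiv\neg I$ and $A\equiv\neg O$ --- so there is nothing to add.
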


\begin{figure}[htbp]
\centerline{\includegraphics[width=4in]{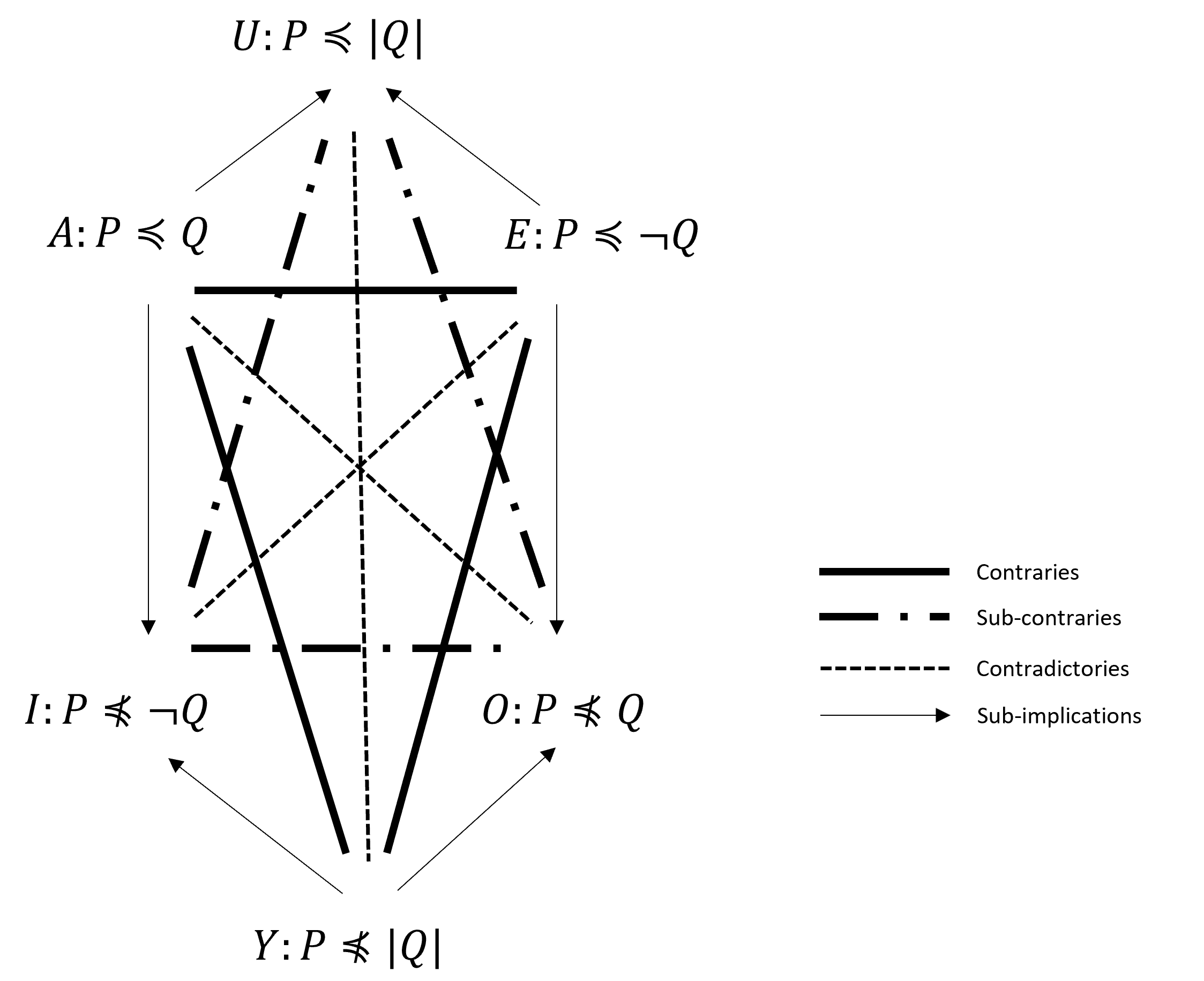}}
\caption{Hexagon of opposition for statements $A$, $E$, $I$, $O$, $U$, $Y$ of class $\cal C$.}
\label{fig:hexagon3}
\end{figure}

\section{Examples of class $\cal C$}
In this section, we show how several of the different types of structures that exhibit a geometric diagram of opposition can be formulated under this algebraic approach. In particular, we show that these structures are examples of class $\cal C$.

\subsection{Augmented Boolean logic} \label{sec:boolean_logic}
In this case we define $X = \left\{0,\frac{1}{2},1\right\}$ with the usual total order $\leq$, $\neg{x} = 1-x$ and $Z = \left\{\frac{1}{2},1\right\}$. In this case, true and false are associated with the values $0$ and $1$ respectvely. In particular, we denote $T_A\in X$ as the numerical value of the truth value of $A$. If $P,Q\in \{0,1\}$, then the inequality $P\preceq Q$ corresponds to the logical implication $P\Rightarrow Q$ and we have recovered the classical diagram of opposition restricted to a single element in the quantifiers which is true even when $P\succ \frac{1}{2}$ is not satisfied.

\subsection{Signed multisets} \label{sec:SM}
Boole was one of the first to systematically link logic with algebra. As was pointed out by Hailperin \cite{Hailperin1981,hailperin:boole:1986}, Boole's algebra (as contrasted with Boolean algebra) is a commutative ring that can be interpreted as a signed multiset, i.e. a multiset where the multiplicities of the elements can be both positive, $0$ or negative. This algebra was studied in \cite{Blizard1990} as a model of logic that contains ZFC and is relatively consistent. 

In  these studies,
the multiplicities are integers, and we consider here a generalization where the multiplicities are themselves elements of a structure in class $\cal C$.
In particular, let $M$ be a set of multiplicities in class $\cal C$ with partial order $\preceq_m$ and negation $\neg_m$ and zero $0_m$.

We denote a (generalized) signed multiset using a Python style dictionary notation as $A = \{a:m_a, b:m_b, c:m_c\}$ indicating that the signed multiset $A$ has elements $a$ with multiplicity $m_a$, $b$ with multiplicty $m_b$ and $c$ with multiplicity $m_c$. 
$\neg A$ is called the complement of $A$ and is defined as the signed multiset where all the multiplicities are negated, i.e. if  $A = \{a:m_a, b:m_b, c:m_c\}$ then  $A = \{a:\neg_m m_a, b:\neg_m m_b, c:\neg_m m_c\}$

The empty set $\emptyset_A$ (which is not unique) is the set containing all elements of $A$ where all the multiplicities are $0_m$. For $A$ and $B$ signed multisets, 
we define $A\preceq B$ if for $b \in B$ with multiplicity $r$, either $b\not\in A$ and $r\succeq 0_m$ or $b\in A$ and $k\preceq_m r$ where $k$ is the multiplicity of $b$ in $A$. 
This implies that $\neg \emptyset_A \preceq \emptyset_B$ for multisets $A$ and $B$. 
If $(M,\oplus)$ is a group with an unit equal to $0_m$ and inverse equal to $\neg_m$, then 
we define the additive union  $A\uplus B$ of two signed multisets $A$ and $B$ as the signed multiset obtained by summing the multiplicities. We define $\neg A$ is the unique multiset such that $A\uplus \neg A = \emptyset_A$.

Note that in \cite{Blizard1990} empty sets do not have proper subsets. Note that in our formulation this is not necessarily true since if $\emptyset_A \preceq A$, then $\neg A \preceq \emptyset_A$.
With this formulation, we obtained a class of signed multisets with a corresponding partial order that is in class $\cal C$ with zero set $Z=\{\emptyset_A\}_A$.
The condition $P\succ 0$ corresponds to a multiset with an element $x$ whose multiplicity $T_x$ satisfies $T_x\succ_m 0_m$. This can be interpreted as a set with an element of positive multiplicity and generalizes the meaning of existential import.

\subsection{Classical sets}
The classical Boolean algebra of subsets \cite{whitesitt:boolean:1961} can be considered a variation of signed multisets.
Let $U$ be the universal set of all elements and $M= \{-1,0,1\}$.
Then we can define the complement of a classical set $\overline{A}$ as $U\uplus \neg A$. Conversely,
we have $\neg A = \overline{A}\uplus \neg U$.
Note that the conditions $P\neq Q$ and $P \neq \neg Q$, which resulted in the links between $A$, $E$, $I$, $O$ and $a$, $e$, $i$, $o$ respectively in Fig. \ref{fig:cube}, correspond to analogous conditions in \cite{Reichenbach1952} for the class of subsets.

\subsection{Vectors}
Given a tuple  $(X,\neg, \preceq)$ in class $\cal C$ we can define a partial order on the Cartesian product $X^n$ of vectors such that $P\preceq Q$ if and only if $P_i \preceq Q_i$ for each corresponding component of $P$ and $Q$ and $\neg P$ is obtained by applying $\neg$ to each component of $P$. It is easy to show that this structure on $X^n$ remain in class $\cal C$. 

An important example is in propositional logic, where a statement with $n$ logical variables corresponds to a truth table of $2^n$ entries and can be represented with a binary vector of length $2^n$.
The equivalence between truth values in Boolean algebra and an algebra in class $\cal C$ allows the relationships of contraries, sub-contraries, contradictories and sub-implications in the square of opposition in Definition \ref{def:logic} to be replaced with a partial order in class $\cal C$ as well. 
Let the vertices of the diagrams of opposition be logical statements on a set $X$. Then to each statement $A$ corresponds a binary vector of truth-values in $\{0,1\}^X$.

\begin{lemma} \label{lem:logic_poset}
Let $T_A$ and $T_B$ denote the truth values vectors of $A$ and $B$ respectively. Let $\vec{1}$ and $\vec{0}$ denote the vectors of all $1$'s and all $0$'s.
\begin{itemize}
\item $A$ is {\em contrary} to $B$ if and only if $T_A+T_B\preceq \vec{1}$.
\item $A$ is {\em sub-contrary} to $B$ if and only if $T_A+T_B\succeq \vec{1}$.
\item $A$ is {\em contradictory} to $B$ if and only if $T_A = \vec{1}-T_B$
\item $A$ {\em sub-implicates} $B$ if and only if $T_A\preceq T_B$
\item $A$ {\em super-implicates} $B$ if and only if $T_A\succeq T_B$ .
\end{itemize}
\end{lemma}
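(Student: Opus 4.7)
The plan is to verify each of the five equivalences directly by unpacking the componentwise interpretation of the truth-value vectors, so the whole proof is really a bookkeeping exercise once the setup is fixed. Each statement $A$ is encoded as a vector $T_A\in\{0,1\}^X$ whose coordinate at $x\in X$ equals $1$ precisely when $A$ is true at $x$. The partial order $\preceq$ on $\{0,1\}^X$ inherited from the vector construction of the preceding subsection is componentwise, which on $0/1$-entries coincides with the usual numeric $\le$. Arithmetic addition is performed in $\mathbb{Z}$, so $T_A+T_B$ has entries in $\{0,1,2\}$, and $\vec{1}-T_B$ is simply the componentwise Boolean complement of $T_B$.

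I would handle the implication clauses first: $A\Rightarrow B$ in Definition~\ref{def:logic} unfolds to ``for every $x\in X$, $T_A(x)=1$ forces $T_B(x)=1$'', which is exactly $T_A(x)\le T_B(x)$ componentwise, i.e.\ $T_A\preceq T_B$; super-implication then follows by swapping the roles of $A$ and $B$. Contradictoriness is equally immediate, since ``$A$ true iff $B$ false'' is the pointwise statement $T_A(x)=1-T_B(x)$ for all $x$, i.e.\ $T_A=\vec{1}-T_B$.

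For the contrary clause I would observe that $A$ and $B$ are simultaneously true at a point $x$ iff $T_A(x)+T_B(x)=2$; therefore the impossibility of both being true at any $x$ is equivalent to $T_A(x)+T_B(x)\le 1$ for every $x$, which is $T_A+T_B\preceq \vec{1}$. Sub-contrariness is handled dually: $A$ and $B$ are simultaneously false at $x$ iff $T_A(x)+T_B(x)=0$, so forbidding this at every $x$ becomes $T_A+T_B\succeq \vec{1}$.

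The only point that requires any care, and what I would flag as the main (very mild) obstacle, is keeping straight that $\preceq$ here denotes the componentwise order on $\{0,1\}^X$ built in the Vectors subsection rather than any order inherited from a $\mathcal{C}$-structure on the two-element alphabet itself, and that $+$ and $\vec{1}-(\cdot)$ are ordinary integer operations rather than Boolean ones. Once these identifications are pinned down, each of the five items collapses to a single pointwise check and no further argument is required.
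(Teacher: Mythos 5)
Your proposal is correct and follows essentially the same route as the paper's proof: both verify each of the five equivalences by a pointwise (row-of-the-truth-table) check, translating ``cannot both be true,'' ``cannot both be false,'' etc.\ into componentwise inequalities on $T_A$ and $T_B$. Your explicit remarks that $\preceq$ is the componentwise order from the Vectors subsection and that $+$ and $\vec{1}-(\cdot)$ are integer operations merely make precise what the paper's proof leaves implicit, so no substantive difference arises.
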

\begin{proof}
A contrary between $A$ and $B$ means that $A$ and $B$ cannot both be true, i.e. each row of the truth table cannot contains 2 true values and this can be represented as $T_A+T_B\preceq \vec{1}$. Similarly a sub-contrary of $A$ and $B$ means that each row of the truth table cannot be both false and this is equivalent to $T_A+T_B\succeq \vec{1}$, a contradictory between $A$ and $B$ corresponds to $T_A = \vec{1}-T_B$ and $A$ sub-implicating $B$ means for each row of the truth table, if $A$ is true, then $B$ is true and this is equivalent to $T_A\preceq T_B$. 
\end{proof}

In \cite{Williamson1972}, the classical square of opposition is compared with the square of opposition using propositional logic. We show that this connection can also be considered under this framework.

The inequality $P\preceq Q$ corresponds to the logical implication $P\Rightarrow Q$. We next show that we still recover the classical diagram of opposition as we move from the first order logic statements in Fig. \ref{fig:square} to propositional logic except for the interesting phenomena where the upper vertices ($A$ and $E$) are now swapped with the lower vertices ($I$ and $O$).
In particular, $P\not\preceq \neg Q$ corresponds to $P\wedge Q$ and by looking at the truth table it is easy to see that $T_{P\wedge Q}\preceq T_{P\Rightarrow Q}$.
Similarly  $T_{P\wedge \neg Q}\preceq T_{P\Rightarrow \neg Q}$. Secondly, $T_{I}+T_{O}\preceq \vec{1}$ and $T_{A}+T_{E}\succeq \vec{1}$ and thus Lemma \ref{lem:logic_poset} shows that we have a square of opposition as illustrated in Fig: \ref{fig:square_prop_logic}.

\begin{figure}[htbp]
\centerline{\includegraphics[width=4in]{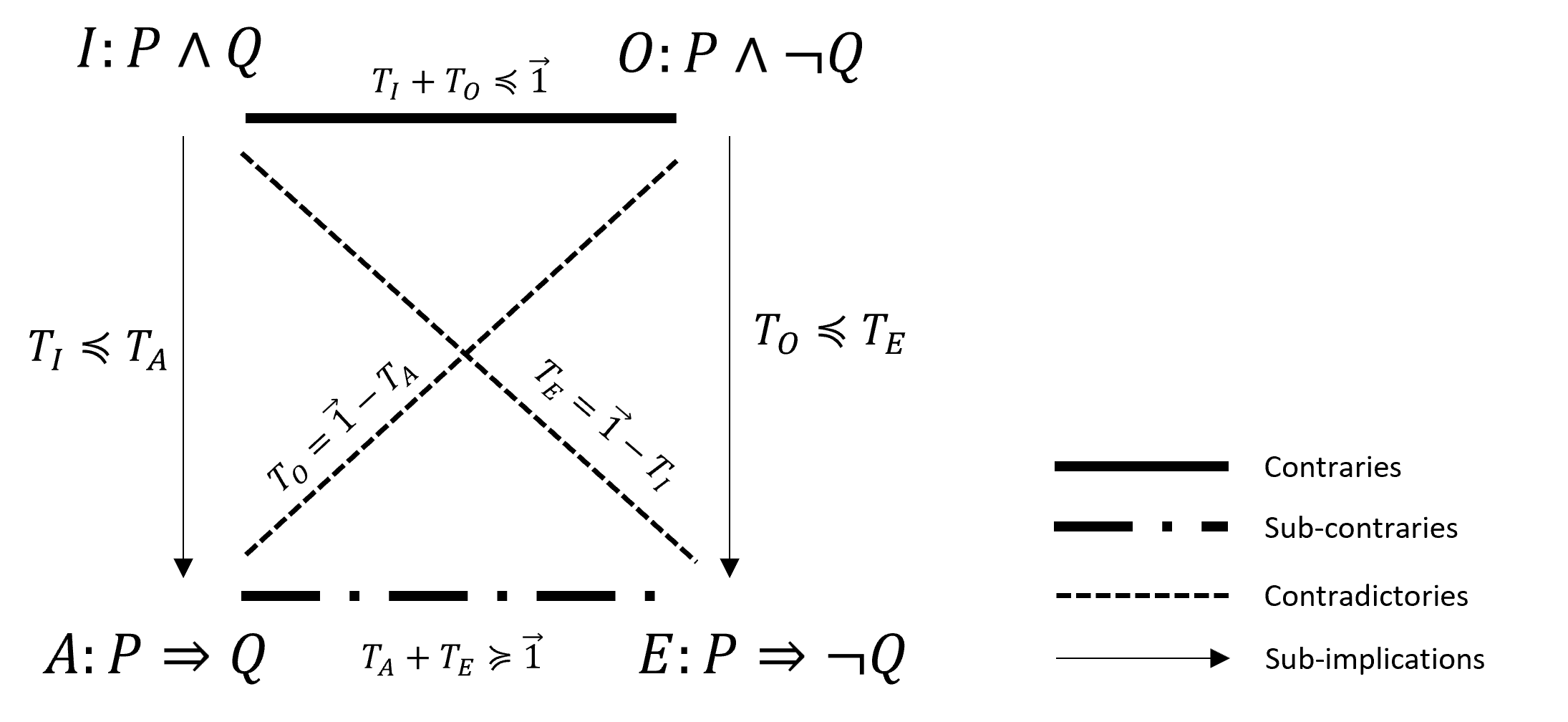}}
\caption{Square of opposition for propositional statements $A$, $E$, $I$, $O$ of class $\cal C$.}
\label{fig:square_prop_logic}
\end{figure}

\subsection{Square matrices}
For the class of complex square matrices, we define the partial ordering $A\preceq B$ if $A$ and $B$ are of the same size and $(B-A)+(B-A)^H$ is positive semidefinite. In this case $0$ corresponds to the zero matrix and $Z$ is the set of matrices $A$ such that $A+A^H = 0$.
Note that here we use the notation $\prec$ as related to the partial ordering of matrices and not as it is usually used for denoting positive definiteness, i.e. $A\prec B$ if $A\preceq B$ and $A\neq B$. This is weaker than the definition of positive definiteness, i.e. $A \succ 0$ means $A$ is positive semidefinite and not equal to the zero matrix, but $A$ is not necessarily positive definite.
In this case the resulting structure is in the class $\cal C$ with the only zero being the all zeros matrix $0$ and $\neg A = -A$.
\subsection{Negation in multi-valued logic}
Consider a strong negation function in multi-valued logic, i.e. a strictly monotone involutive function $\neg:[0,1]\rightarrow [0,1]$ such that $\neg 0 = 1$ and $\neg 1 = 0$. The function $\neg$ can be decomposed as $\phi^{-1}(1-\phi(x))$ for some continous strictly increasing bijection $\phi$ on $[0,1]$ \cite{trillas:negation:1979}. It is clear that $Z$ is nonempty and
$([0,1],\neg,\preceq)$ is in class $C$.

\section{Conclusions}
We show how the square of opposition and its generalization can be applied to various algebraic structures. This continues the study to identify the linkage between logic and algebra, a connection that started with Boole's and Venn's work \cite{boole:logic:1847,venn1880} and continued by many others \cite{Stone1936,Halmos1998} ever since. A recent study connecting multi-valued logic and certain combinatorial and algebraic properties can be found in \cite{Wu:rearrange:2023}.

\end{document}